\theoremstyle{plain}
\newtheorem{theorem}{Theorem}
\newtheorem{lemma}{Lemma}
\numberwithin{equation}{section}
\def\mydate{\number\year-\ifnum\month<10{0}\fi\number\month-\ifnum\day<10{0}\fi\number\day}
\newcommand{\dy}{\partial}
\newcommand{\ddt}[1]{\frac{\mathrm{d}{#1}}{\mathrm{d}{t}}}
\newcommand{\sfrac}[2]{{\textstyle\frac{#1}{#2}}}
\newcommand{\eps}{\varepsilon}
\newcommand{\ncdot}{\!\cdot\!}
\newcommand{\gb}{\nabla}
\newcommand{\cpoi}{c_0}
\newcommand{\Dom}{\Omega}
\newcommand{\ktil}{\tilde K}
\begin{document}

\title[Timestepping Schemes for 3d NSE]%
{Timestepping Schemes for\\
the 3d Navier--Stokes Equations:\\
Small Solutions and Short Times}

\author[Hong]{Youngjoon Hong}
\email{hongy@indiana.edu}
\urladdr{http://pages.iu.edu/~hongy}
\address[YJH]{Mathematics Department, Indiana University, Bloomington, IN~47405--7106, United States}

\author[Wirosoetisno]{Djoko Wirosoetisno}
\email{djoko.wirosoetisno@durham.ac.uk}
\urladdr{http://www.maths.dur.ac.uk/\~{}dma0dw}
\address[DW]{Mathematical Sciences, Durham University, Durham\ \ DH1~3LE, United Kingdom}

\thanks{%
This work was supported in part by NSF Grant DMS 1206438
and by the Research Fund of Indiana University.
}

\keywords{3d Navier--Stokes, small solutions, short time, temporal discretisation, Euler schemes}
\subjclass[2010]{Primary:
65M12, 
76D05  
}

\begin{abstract}
It is well known that the solution of the 3d Navier--Stokes equations
remains bounded if the initial data and the forcing are sufficiently
small relative to the viscosity,
and for a finite time given any bounded initial data.
In this article, we consider two temporal discretisations (semi-implicit
and fully implicit) of the 3d Navier--Stokes equations in a periodic domain
and prove that their solutions remain bounded in $H^1$ subject to
essentially the same smallness conditions (on initial data, forcing or time)
as the continuous system and to suitable timestep restrictions.
\end{abstract}

\maketitle


\section{Introduction}\label{s:intro}

\nocite{geveci:89}
\nocite{marion-temam:98}
\nocite{shen:dugl:90}
\nocite{temam:nse}
\nocite{tone-dw:dugl}
\nocite{gtw3:dodu}
\nocite{wxm:12}
\nocite{constantin-foias:nse}

Much work has been done on the stability and convergence of various
timestepping schemes for the Navier--Stokes equations in two space
dimensions (2d NSE).
The stability of Euler schemes for 2d NSE has been treated in, e.g.,
\cite{geveci:89,shen:dugl:90,ju:02,tone-dw:dugl}, and more recently
extended to higher-order schemes in \cite{wxm:12,gtw3:dodu}.
Given sufficient boundedness of the numerical solution, convergence
can usually be established using now-standard techniques
(cf., e.g., \cite{marion-temam:98}).

In three dimensions, boundedness of the solution for a finite time
(depending on the initial data) follows essentially from the
Cauchy--Kovalevskaya theorem.
It is also well known that, if both the initial data and the forcing
are sufficiently small (relative to the viscosity), the solution will
be globally bounded.
For more background on the NSE, see, e.g.,
\cite{constantin-foias:nse,temam:nse}.

In this article we consider temporal discretisations of the 3d NSE using
the semi-implicit \eqref{q:semi} and fully implicit \eqref{q:full} schemes,
and following ideas from \cite{tone-dw:dugl} prove discrete analogues of
the short-time and small-data boundedness of the continuous-time case.
As in the earlier works cited above, we do not consider spatial
discretisations, giving the advantage that our results will be
free of Courant--Friedrichs--Lewy-type constraints,
although some smallness of the timestep may be required.

\medskip
We consider the Navier--Stokes equations in $\Dom=(0,2\pi)^3$
with periodic boundary conditions,
\begin{equation}\label{q:dudt}\begin{aligned}
   &\dy_t u + (u\ncdot\gb) u + \gb p = \nu\Delta u + f,\\
   &\gb\ncdot u = 0,
\end{aligned}\end{equation}
plus the initial data $u(0)=u_0$.
With no loss of generality, we assume
that $\gb\ncdot f=0$,
and that the integrals of $f$ and $u_0$ vanish over $\Dom$.
The last assumption implies that $u=u(t)$, whenever it is well-defined for
$t\ge0$, also has vanishing integral over $\Dom$, giving us the Poincar{\'e}
inequality
\begin{equation}\label{q:cpoi}
   |u|_{L^2}^2 \le \cpoi(\Dom) |\gb u|_{L^2}^2.
\end{equation}
For notational convenience, we redefine $\cpoi$ to give also the bound
\begin{equation}
   |\gb u|_{L^2}^2 \le \cpoi |\Delta u|_{L^2}^2.
\end{equation}

In order to facilitate comparison with the numerical solutions,
in the rest of this section we briefly review the boundedness of
solutions of the 3d NSE,
both in $L^2$ and in $H^1$ for the two cases (small data and short time).

Multiplying \eqref{q:dudt} by $u$ in $L^2(\Dom)$,
integrating by parts and using the fact that $(u\ncdot\gb u,u)=0$,
we find
\begin{equation} \label{e:1.4}
   \frac12\ddt{\;} |u|^2 + \nu|\gb u|^2 = (f,u).
\end{equation}
Here and henceforth, unadorned norm $|\cdot|$ and inner product $(\cdot,\cdot)$
are taken to be $L^2$.
Bounding the rhs by the Cauchy--Schwarz inequality and using the Poincar{\'e} inequality,
\eqref{e:1.4} becomes
\begin{equation}
   \ddt{\;}|u|^2 + \frac\nu\cpoi |u|^2 \le \frac1\nu |f|^2_{L^{\infty}(H^{-1})},
\end{equation}
where $|f|_{L^{\infty}(H^{-1})}:=\sup_{t\ge0}|f(t)|_{H^{-1}}$.
Integrating, we find the uniform $L^2$ bound
\begin{equation}\label{q:K0}
   |u(t)|^2 \le |u(0)|^2 + (\cpoi/\nu^2) |f|^2_{L^{\infty}(H^{-1})}
	=: K_0(u_0,f;\nu,\Dom).
\end{equation}


\subsection{$H^1$ estimate for small solutions}

Now multiplying \eqref{q:dudt} by $-\Delta u$ in $L^2(\Dom)$ and integrating
by parts, we find
\begin{equation}
   \frac12\ddt{\;}|\gb u|^2 + \nu|\Delta u|^2 = (u\ncdot\gb u,\Delta u) - (f,\Delta u).
\end{equation}
Bounding the nonlinear term using the Sobolev inequality,
\begin{equation}\label{q:c1}
   \bigl|(u\ncdot\gb u,\Delta u)\bigr|
	\le |u|_{L^3}^{}|\gb u|_{L^6}^{}|\Delta u|_{L^2}^{}
	\le \frac{c_1}2|u|_{L^3}^{}|\Delta u|_{L^2}^2,
\end{equation}
and the forcing term in the obvious fashion, we arrive at
\begin{equation}
   \ddt{\;}|\gb u|^2 + (3\nu/2 - c_1|u|_{L^3}^{})|\Delta u|^2 \le \frac2\nu |f|^2.
\end{equation}
Assuming for now that
\begin{equation}\label{q:ul3}
   |u(t)|_{L^3}^{} \le \nu/(2c_1)
	\qquad\textrm{for all }t\ge0,
\end{equation}
we can integrate the differential inequality to obtain
\begin{equation}\label{q:K1}
   |\gb u(t)|^2 \le |\gb u(0)|^2 + (2\cpoi/\nu^2) |f|_{L^{\infty}(L^2)}^2
	=: K_1(u_0,f;\nu,\Dom),
\end{equation}
where $|f|_{L^{\infty}(L^2)} := \sup_{t \geq 0}|f(t)|_{L^2}$.
Using the Sobolev inequality
$|u|_{L^3}^2 \le c\, |u|_{H^{1/2}}^2 \le c\,|u|\,|\gb u|$,
a sufficient condition for \eqref{q:ul3} is
\begin{equation}\label{q:K0K1}
   K_0 K_1 = \bigl(|u_0|^2 + \cpoi\, |f|^2_{L^{\infty}(H^{-1})} \bigr)
	\Bigl(|\gb u_0|^2 + \frac{2\cpoi}{\nu^2}|f|_{L^{\infty}(L^2)}^2\Bigr)
	\le c_2(\Dom)\,\nu^4.
\end{equation}
It therefore follows that whenever this holds, the 3d NSE has a global
solution bounded by \eqref{q:K0} and \eqref{q:K1}.
It will be convenient to use the Poincar{\'e} inequality to derive
a slightly stronger condition that implies \eqref{q:K0K1},
\begin{equation}\label{q:K1K1}
   K_1 = \bigl(|\gb u_0|^2 + 2\cpoi\,|f|_{L^{\infty}(L^2)}^2/\nu^2\bigr) \le c_3(\Dom)\nu^2,
\end{equation}
with $c_3=\sqrt{c_2/\cpoi}$.


\subsection{$H^1$ estimate for short times}

Multiplying \eqref{q:dudt} by $-\Delta u$ in $L^2(\Dom)$
and integrating  by parts, we find
\begin{equation}
    \frac12\ddt{\;}|\gb u|^2 + \nu|\Delta u|^2 = (u\ncdot\gb u,\Delta u) - (f,\Delta u).
\end{equation}
Bounding the nonlinear term using the Sobolev and interpolation inequalities,
\begin{equation}\label{q:c4}\begin{aligned}
    |(u \cdot \nabla u, \Delta u)|
	&\le |u|_{L^6}^{} |\nabla u|_{L^3}^{} |\Delta u|_{L^2}^{}
	& &\le c\,|\gb u|\,|\gb u|_{H^{1/2}}^{}|\Delta u|\\
	&\le c\,|\gb u|^{3/2}|\Delta u|^{3/2}
	& &\le \frac{c_4}{2\nu^3} |\nabla u|^6 + \frac{\nu}{2} |\Delta u|^2,
\end{aligned}\end{equation}
and using the Cauchy--Schwarz inequality for the last term, this gives
\begin{equation}\label{q:dh1dt}
   \ddt{\;}|\nabla u|^2\le \frac{c_4}{\nu^3} |\nabla u|^6 + \frac{1}{\nu} |f|_{L^{\infty}(L^2)}^2.
\end{equation}
This implies
\begin{equation}\label{q:dzdt}
   \ddt{\;}(|\gb u|^2 + F)
	\le \frac{c_4}{\nu^3}(|\gb u|^2 + F)^3,
\end{equation}
where $F:=\bigl(\nu^2|f|_{L^{\infty}(L^2)}^2/c_4\bigr)^{1/3}$.
Writing $z(t):=|\gb u(t)|^2 + F$ and integrating, we find
\begin{equation}\label{q:zt}
   \frac{z(t)^2}{z(0)^2} \le\frac{1}{1-2tc_4z(0)^2/\nu^3},
\end{equation}
as long as $t<\nu^3/(2c_4z(0)^2)$.
It is clear from this that our solution will remain bounded, say,
$z(t)^2\le 2z(0)^2$, for $0\le t\le \nu^3/(4c_4 z(0)^2)$.


\section{Semi-implicit Scheme}\label{s:semi}

Given a fixed $k>0$, we discretise \eqref{q:dudt} in time using the semi-implicit Euler scheme
\begin{equation}\label{q:semi}
   \frac{u^n-u^{n-1}}k + u^{n-1}\ncdot\gb u^n + \gb p = \nu\Delta u^n + f^n,
\end{equation}
with $u^0=u_0$.
For 2d NSE, this scheme was proved in \cite{ju:02} to be globally stable in $H^1$.
For 3d NSE, its stability mirrors that (which is known) of the continuous system,
subject to relatively mild timestep restrictions:

\begin{theorem}\label{t:semi}
For small solutions, let the initial data $u_0 \in H^1$, the forcing $f$
and the time\-step $k$ satisfy
\begin{equation}\label{q:K0K1s}
   ( K_0 + k|f|^2_{L^{\infty}(H^{-1})} /\nu) (K_1 + 2k|f|_{L^{\infty}(L^2)}^2/\nu) \leq c_2(\Dom) \nu^4,
\end{equation}
where $K_0(u_0,f)$ and $K_1(u_0,f)$ are as in the continuous case,
\eqref{q:K0} and \eqref{q:K1}.
Then $u^n$ is bounded in $H^1$ as follows,
\begin{equation}\label{q:bdh1s}
   |\gb u^n|^2 \le K_1 + (2k/\nu)|f|_{L^{\infty}(L^2)}^2
   \qquad\textrm{for all }n\ge0.
\end{equation}
For short times, assuming the timestep restriction \eqref{q:dtf5},
we have
\begin{equation}
   |\gb u^n|^2 \le 2\,|\gb u^0|^2 + F,
\end{equation}
for all $n$ such that $nk=t_n\le \nu^3/\bigl(8c_4(|\gb u^0|^2+F)^2\bigr)$.
\end{theorem}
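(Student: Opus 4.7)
The plan is to prove both parts by induction on $n$, following the structure of the continuous-time estimates of Section~1. The discrete time derivative is handled throughout via the elementary identity $2(a - b, a) = |a|^2 - |b|^2 + |a - b|^2$, and its $H^1$ counterpart obtained by pairing with $-\Delta a$ and integrating by parts; the trilinear term arising at the $L^2$ level vanishes because $u^{n-1}$ is divergence-free. The base case $n = 0$ is immediate from the initial data.

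For Part~1 (small solutions), the inductive hypothesis at step $n$ is that
$|u^m|^2 \le K_0 + k|f|_{L^\infty(H^{-1})}^2/\nu$ and
$|\gb u^m|^2 \le K_1 + 2k|f|_{L^\infty(L^2)}^2/\nu$ hold for all $m < n$.
Combined with the Sobolev embedding $|u|_{L^3}^2 \le c\,|u|\,|\gb u|$ and the assumption \eqref{q:K0K1s}, this gives $c_1 |u^{n-1}|_{L^3} \le \nu/2$. Taking the $L^2$ inner product of \eqref{q:semi} with $u^n$, bounding $(f^n, u^n)$ by Young's inequality on the $H^{-1}\times H^1$ pairing, and using Poincar\'e on the dissipation yields $(1 + k\nu/\cpoi)|u^n|^2 \le |u^{n-1}|^2 + k|f|^2/\nu$, which preserves the upper bound (a short calculation using $K_0 \ge \cpoi|f|^2/\nu^2$). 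The $H^1$ estimate is obtained analogously from the inner product with $-\Delta u^n$: the smallness of $|u^{n-1}|_{L^3}$ allows the nonlinear term to be absorbed into the dissipation via \eqref{q:c1}, and a parallel Poincar\'e/induction step closes the argument.

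For Part~2 (short times), set $z_m := |\gb u^m|^2 + F$ as in Section~1, with inductive hypothesis $z_m \le 2 z_0$ for $m < n$. Taking the $L^2$ inner product of \eqref{q:semi} with $-\Delta u^n$, the mixed-time nonlinear term is bounded, as in \eqref{q:c4}, by $c\,|\gb u^{n-1}|\,|\gb u^n|^{1/2}\,|\Delta u^n|^{3/2}$. Young's inequality with exponents $(6,12,4/3)$ separates this cleanly into $|\gb u^{n-1}|^6$, $|\gb u^n|^6$, and $|\Delta u^n|^2$ pieces without any mixed cross term; after absorbing the $|\Delta u^n|^2$ and forcing contributions (the latter using the definition of $F$), one obtains a discrete recursion of the form $z_n - z_{n-1} \le (kc_4/\nu^3)(z_{n-1}^3 + z_n^3)$. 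A short bootstrap argument based on the timestep restriction \eqref{q:dtf5} and the inductive hypothesis keeps the per-step ratio $z_n/z_{n-1}$ bounded (say, $\le 2$), so $\psi_n := z_n^{-2}$ satisfies $\psi_{n-1} - \psi_n \le \alpha kc_4/\nu^3$ for some absolute $\alpha$ (via $(1+x)^{-2} \ge 1 - 2x$ applied to the recursion). Summing gives $\psi_n \ge \psi_0 - \alpha t_n c_4/\nu^3$, the discrete analog of \eqref{q:zt}; requiring $t_n \le \nu^3/(8c_4 z_0^2)$ then yields $z_n^2 \le 2 z_0^2$, hence $|\gb u^n|^2 \le 2|\gb u^0|^2 + F$, closing the induction.

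The main obstacle is Part~2: the mixed-time nonlinearity $(u^{n-1}\ncdot\gb u^n)$ of the semi-implicit scheme obstructs the clean $|\gb u|^6$ bound of the continuous argument \eqref{q:c4}, and the natural substitute contains both $|\gb u^{n-1}|^6$ and $|\gb u^n|^6$. Converting this into a discrete Riccati-type recursion that sums cleanly requires both the careful Young's-exponent choice above and the timestep restriction \eqref{q:dtf5} controlling the per-step growth of $z_n$. Part~1 is comparatively routine: the smallness hypothesis is propagated step by step, and the nonlinear term is absorbed into the dissipation essentially as in the continuous case.
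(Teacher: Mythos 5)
Part 1 of your proposal is essentially the paper's argument (discrete energy identity, smallness of $|u^{n-1}|_{L^3}$ propagated by induction via \eqref{q:K0K1s}, nonlinear term absorbed into the dissipation); whether one closes it by a step-preserving induction or by the discrete Gronwall lemma \eqref{q:dgrone} is immaterial. The problem is Part 2, where your treatment of the mixed-time nonlinearity has a genuine gap.

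Your Young split with exponents $(6,12,4/3)$ turns
$c\,|\gb u^{n-1}|\,|\gb u^n|^{1/2}|\Delta u^n|^{3/2}$ into a sum containing $|\gb u^n|^6$, and hence yields the \emph{implicit} inequality $0\le \alpha z_n^3 - z_n + \bigl(z_{n-1}+\alpha z_{n-1}^3\bigr)$ with $\alpha=c_4k/\nu^3$. This inequality does not bound $z_n$: its solution set is either all of $[0,\infty)$ or a disconnected set $[0,y_1]\cup[y_2,\infty)$ with $y_2\sim k^{-1/2}$. In fact, under the stated restriction \eqref{q:dtf5} one checks that the local minimum of the cubic is \emph{positive} (take $|\gb u^0|=0$, so $\alpha\le 1/(2z_0^2)$, $z_{n-1}\le 2z_0$: the minimum value is about $3z_0-\tfrac23(3\alpha)^{-1/2}>0$), so the inequality is vacuous and gives no information at all. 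Even with a harsher timestep restriction making the minimum negative, you would still have to exclude the branch $z_n\ge y_2$; a ``bootstrap based on the inductive hypothesis'' cannot do this, because there is no continuity between discrete steps — nothing in the inequality prevents $z_n$ from jumping to the upper branch. (This is precisely the difficulty the paper confronts for the \emph{fully} implicit scheme, where it requires the separate a priori estimate obtained by testing with $2k(u^n-u^{n-1})$ to rule out $y_2$.) Your $\psi_n=z_n^{-2}$ summation at the end is fine, but only once an \emph{explicit} recursion $z_n\le z_{n-1}+kg(z_{n-1})$ is in hand.

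The fix — and the point of the semi-implicit scheme — is to keep the nonlinear contribution \emph{linear} in the unknown $|\gb u^n|^2$: bound the trilinear term by $\tfrac{\nu}{2}|\Delta u^n|^2+\tfrac{c_4}{2\nu^3}|\gb u^{n-1}|^4|\gb u^n|^2$ (Young with exponents $(4,4/3)$ on the pair $|\gb u^{n-1}|\,|\gb u^n|^{1/2}$ and $|\Delta u^n|^{3/2}$), move the resulting term to the left-hand side as $\bigl(1-\tfrac{c_4k}{\nu^3}|\gb u^{n-1}|^4\bigr)|\gb u^n|^2$, and use \eqref{q:dtf5} together with the induction hypothesis $|\gb u^{n-1}|^2\le 2|\gb u^0|^2$ to keep that coefficient $\ge\tfrac12$. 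This produces the explicit recursion $z_n-z_{n-1}\le (2c_4k/\nu^3)z_{n-1}^3$, to which your summation (or the paper's comparison with the ODE $\dot\zeta=g(\zeta)$) applies.
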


We note a few facts that will be useful later.
First, for any $a$ and $b\in L^2$,
\begin{equation}\label{q:id0}
   2(a-b,a) = |a|^2 - |b|^2 + |a-b|^2.
\end{equation}
Next, for $b>0$ and positive sequences $(x_n)$ and $(r_n)$,
\begin{equation}\label{q:dgrone}
   (1+b)\,x_n \le x_{n-1} + r_{n-1}
   \qquad\Rightarrow\qquad
   x_n \le (1+b)^{-n} x_0 + \frac{1+b}b {\textstyle\max_j}\, r_j.
\end{equation}

\begin{proof}
The $L^2$ bound works out essentially as in the continuous case:
multiplying $\eqref{q:semi}_1$ by $2ku^n$, using \eqref{q:id0} and
noting that $(u^{n-1}\ncdot\gb u^n,u^n)=0$, we find
\begin{equation}
   |u^n|^2 + |u^n-u^{n-1}|^2 + 2\nu k |\gb u^n|^2 = |u^{n-1}|^2 + 2k (f^n,u^n).
\end{equation}
Bounding the forcing term using Cauchy--Schwarz and Poincar{\'e},
this implies
\begin{equation}
   (1 + \nu k/\cpoi) |u^n|^2 \le |u^{n-1}|^2 + k|f^n|_{H^{-1}}^2/\nu.
\end{equation}
Integrating this using \eqref{q:dgrone}, we find for all $n\in\{1,2,\cdots\}$,
\begin{equation}\label{q:bdl2s}\begin{aligned}
   |u^n|^2 &\le |u^0|^2 + \frac{\cpoi+\nu k}{\nu^2} |f|^2_{L^{\infty}(H^{-1})}\\
	&= K_0(u_0,f;\nu,\Dom) + (k/\nu) |f|^2_{L^{\infty}(H^{-1})},
\end{aligned}\end{equation}
where $K_0$, and $K_1$ below, are as in the continuous case.
We note that this bound tends to $K_0$ as $k\to0$.

\medskip
We now turn to stability in $H^1$ for small solutions.
Multiplying \eqref{q:semi} by $-2k \Delta u^n$ and using \eqref{q:id0},
we find
\begin{equation}\label{q:semi-h1}\begin{aligned}
    |\nabla u^n|^2 &+ |\nabla(u^n - u^{n-1})| + 2 \nu k |\Delta u^n|^2  \\
    &= |\nabla u^{n-1}|^2 - 2k(f^n,\Delta u^n) + 2k(u^{n-1} \cdot \nabla u^n, \Delta u^n).
\end{aligned}\end{equation}
Bounding the nonlinear term using the Sobolev inequality,
\begin{equation}
   \bigl|(u^{n-1}\ncdot\gb u^n,\Delta u^n)\bigr|
	\le |u^{n-1}|_{L^3}^{}|\gb u^n|_{L^6}^{}|\Delta u^n|_{L^2}^{}
	\le c_1|u^{n-1}|_{L^3}^{}|\Delta u^n|_{L^2}^2,
\end{equation}
and using the Cauchy--Schwarz inequality for the forcing, \eqref{q:semi-h1} implies
\begin{equation}\label{q:semi0}
    |\nabla u^n|^2 + (3 \nu/2 - c_1 |u^{n-1}|_{L^3})k |\Delta u^n|^2
	\le |\nabla u^{n-1}|^2 + (2k/\nu)|f^n|^2.
\end{equation}
If we now assume that
\begin{equation}\label{q:hyps}
   |u^{n-1}|_{L^3} \le \nu/(2c_1),
\end{equation}
we deduce from \eqref{q:semi0}
\begin{equation}
    (1+\nu k/\cpoi) |\nabla u^n|^2 \leq |\nabla u^{n-1}|^2 + 2k |f^n|^2/\nu.
\end{equation}
As long as \eqref{q:hyps} holds, we can integrate this using \eqref{q:dgrone}
to get the bound
\begin{equation}\begin{aligned}
    |\nabla u^n|^2
    &\le |\nabla u^0|^2 +  \frac{2(\cpoi + \nu k)}{\nu^2} |f|_{L^{\infty}(L^2)}^2\\
    &\le K_1 (u_0,f;\nu,\Dom) + (2k/\nu) |f|_{L^{\infty}(L^2)}^2,
\end{aligned}\end{equation}
which proves \eqref{q:bdh1s}.
As in the continuous case, we now use Sobolev and interpolation
inequalities to bound
\begin{equation}
   |u^{n-1}|_{L^3}^2 \le c\, |u^{n-1}|_{H^{1/2}}^2
	\le c\,|u^{n-1}|\,|\gb u^{n-1}|.
\end{equation}
The timestep restriction \eqref{q:K0K1s} then becomes a sufficient condition
for \eqref{q:hyps}.
More explicitly, since \eqref{q:K0K1s} holds at $n=0$,
\eqref{q:bdl2s} and \eqref{q:bdh1s}
imply that it will hold at $n=1$ and, by induction, for all $n\in\{2,\cdots\}$,
i.e.\ the solution of the scheme \eqref{q:semi} is bounded
uniformly in (discrete) time subject to \eqref{q:K0K1s}.
Comparing to \eqref{q:K0K1}, we note that this condition also depends
on the timestep $k$ in addition to $u_0$ and $f$.
This timestep restriction is however relatively mild compared to that
for the fully implicit scheme in \S\ref{s:full} below.

\medskip
For short-time $H^1$ stability, we bound the nonlinear term
in \eqref{q:semi-h1} as in \eqref{q:c4},
\begin{equation}
   |(u^{n-1}\cdot\gb u^n,\Delta u^n)|
	\le \frac{\nu}2\,|\Delta u^n|^2 + \frac{c_4}{2\nu^3}|\gb u^{n-1}|^4|\gb u^n|^2,
\end{equation}
to give
\begin{equation}
   |\gb u^n|^2 \le |\gb u^{n-1}|^2 + \frac{c_4k}{\nu^3}|\gb u^{n-1}|^4|\gb u^n|^2
	+ \frac{k}{\nu}|f|_{L^{\infty}(L^2)}^2 .
\end{equation}
We rewrite this as
\begin{equation}\label{q:aus00}\begin{aligned}
   \Bigl(1 &- \frac{c_4k}{\nu^3}|\gb u^{n-1}|^4\Bigr)|\gb u^n|^2\\
	&\quad\le \Bigl(1 - \frac{c_4k}{\nu^3}|\gb u^{n-1}|^4\Bigr)|\gb u^{n-1}|^2
		+ \frac{c_4k}{\nu^3}|\gb u^{n-1}|^6 + \frac{k}{\nu}|f|_{L^{\infty}(L^2)}^2.
\end{aligned}\end{equation}
Since we are interested in short times, we assume that
$|\gb u^{n-1}|^2 \le 2|\gb u^0|^2$ for all relevant $n$ and demand
that $k$ satisfy
\begin{equation}\label{q:dtf5}
   k \le \frac{\nu^3}{2c_4(2|\gb u^0|^2+F)^2},
\end{equation}
where $F>0$ is that in \eqref{q:dzdt}.
This implies that the brackets in \eqref{q:aus00} are ${}\ge\sfrac12$;
we have added the extra $F$ for later use.
With this assumption, \eqref{q:aus00} implies
\begin{equation}\label{q:aus01}
   \frac{|\gb u^n|^2 - |\gb u^{n-1}|^2}{k} \le \frac{2c_4}{\nu^3}|\gb u^{n-1}|^6 + \frac{2}{\nu}|f|_{L^{\infty}(L^2)}^2.
\end{equation}
Unlike its continuous-time analogue \eqref{q:dh1dt}, this difference
inequality implies $|\gb u^n|<\infty$ for all $n$, although for sufficiently
large time $nk$ it (i.e.\ the bound) grows without bound as $k\to0$.
This is a well-known pitfall in discretising differential equations in time.
To obtain a finite-time bound on $|\gb u^n|$, we proceed in analogy
with \eqref{q:dzdt} and define
\begin{equation}
   z_n := |\gb u^n|^2 + F.
\end{equation}
We then get from \eqref{q:aus01}
\begin{equation}
   \frac{z_n-z_{n-1}}{k} \le \frac{2c_4}{\nu^3}z_{n-1}^3
	=: g(z_{n-1}).
\end{equation}
Observe that $g(\zeta)>g(\hat\zeta)$ whenever $\zeta>\hat\zeta$,
that is, $g\ge0$ is strictly monotone increasing function.

Now let $\zeta_n$ be the positive solution of
the difference {\em equation\/},
\begin{equation}\label{q:dd0}
   \frac{\zeta_n-\zeta_{n-1}}{k} = g(\zeta_{n-1}),
\end{equation}
and observe that $\zeta_n \geq 0$ if $\zeta_{n-1} \geq 0$.
Denoting $t_n:=nk$, we claim that $\zeta_n\le \zeta(t_n)$ where $\zeta(\cdot)$
is the solution of the {\em differential\/} equation
\begin{equation}
   \ddt{\zeta} = g(\zeta)
   \qquad\textrm{with }\zeta(t_{n-1})=\zeta_{n-1}.
\end{equation}
To show this, we first note that $\zeta(t)$ is non-decreasing since $g\ge0$.
Then
\begin{equation}\label{q:dd1}
   \zeta(t_n) - \zeta(t_{n-1}) = \int_{t_{n-1}}^{t_n} g(\zeta(t)) \;\mathrm{d}t
	\ge \int_{t_{n-1}}^{t_n} g(\zeta(t_{n-1})) \;\mathrm{d}t
	= kg(\zeta_{n-1}),
\end{equation}
proving our claim.
By induction, taking $\zeta(0)=\zeta_0>0$ instead of the initial data
in \eqref{q:dd0}, we then have
$\zeta_n\le \zeta(t_n)$ for all $n\in\{1,2,\cdots\}$.
Comparing with the continuous case \eqref{q:dzdt}--\eqref{q:zt},
we conclude that $\zeta_n\le\zeta(t_n)\le 2\zeta(0)=2\zeta_0$ for
$nk=t_n\le \nu^3/(8c_4\zeta_0^2)$.

Taking $\zeta_0=z_0$, clearly $z_n\le\zeta_n$ for all $n\ge0$.
We therefore have
\begin{equation}
   |\gb u^n|^2 \le 2\,|\gb u^0|^2 + F,
\end{equation}
for all $n$ such that $nk = t_n \le \nu^3/\bigl(8c_4(|\gb u^0|^2 + F)^2\bigr)$,
which is half as long as the bound in the continuous case.
\end{proof}


\section{Fully Implicit Scheme}\label{s:full}

We now consider the fully implicit Euler scheme
\begin{equation}\label{q:full}
   \frac{u^n-u^{n-1}}k + u^n\ncdot\gb u^n + \gb p = \nu\Delta u^n + f^n,
\end{equation}
with $\gb\cdot u^n=0$ for all $n$ and $u^0=u_0$.
In two space dimensions, uniform boundedness in $H^1$ for this scheme
was proved in \cite{tone-dw:dugl}, whose ideas we borrow below.

The $L^2$ bound obtains as before:
multiplying $\eqref{q:full}_1$ by $2ku^n$ and using \eqref{q:id0},
\begin{equation}
   |u^n|^2 + |u^n-u^{n-1}|^2 + 2\nu k |\gb u^n|^2 = |u^{n-1}|^2 + 2k (f^n,u^n).
\end{equation}
Bounding the forcing term in the obvious manner and using Poincar{\'e},
this implies
\begin{equation}
   (1 + \nu k/\cpoi) |u^n|^2 \le |u^{n-1}|^2 + k |f^n|_{H^{-1}}^2/\nu.
\end{equation}
Integrating this using \eqref{q:dgrone}, we find for all $n\in\{1,2,\cdots\}$,
\begin{equation}\label{q:bdfl2}\begin{aligned}
   |u^n|^2 &\le |u^0|^2 + \frac{\cpoi+\nu k}{\nu^2} |f|^2_{L^{\infty}(H^{-1})}\\
	&= K_0(u_0,f;\nu,\Dom) + (k/\nu) |f|^2_{L^{\infty}(H^{-1})}.
\end{aligned}\end{equation}
As before, this bound tends to $K_0$ as $k\to0$.
For later use, we define
\begin{equation}
   \ktil_0(u_0,f;\nu,\Dom) := |u_0|^2 + \frac{2\cpoi}{\nu^2} |f|^2_{L^{\infty}(H^{-1})}.
\end{equation}

The central ingredient for our main results is the following local-in-time
estimate:

\begin{lemma}\label{t:lem}
We assume the $L^2$ uniform bound \eqref{q:bdfl2} and that $u^{n-1}\in H^1$.
Assuming further the timestep restrictions
\begin{align}
   &K^{(n-1)} \le \frac12\Bigl(\frac{\nu^3}{3c_4k}\Bigr)^{1/2}\label{q:dtfx},\\
   &\Bigl(1 + \frac{{ c_5}}{\nu^4} \ktil_0K^{(n-1)}\Bigr)K^{(n-1)} + |f|^2_{L^{\infty}(H^{-1})}/\nu^2 \le \Bigl(\frac{\nu^3}{{ 12c_4k}}\Bigr)^{1/2},\label{q:dtfy}
\end{align}
where $K^{(n-1)}:=|\gb u^{n-1}|^2 + ({10} c_0/\nu)|f|_{L^{\infty}(L^2)}^2$,
then the solution $u^n$ of \eqref{q:full} is bounded as $|\gb u^n|^2\le y_1$
where $y_1$ is the smallest positive root of the cubic equation \eqref{q:un6}.
\end{lemma}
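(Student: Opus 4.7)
The plan is to derive a polynomial inequality---which I expect to be cubic---in the single unknown $y := |\gb u^n|^2$ from a discrete $H^1$ energy identity, and then use the timestep restrictions to isolate the smaller of the two relevant roots by a continuity/induction argument. The starting point is the identity obtained by pairing \eqref{q:full} with $-2k\Delta u^n$ and using \eqref{q:id0}:
\begin{equation*}
|\gb u^n|^2 + |\gb(u^n-u^{n-1})|^2 + 2\nu k|\Delta u^n|^2 = |\gb u^{n-1}|^2 + 2k(u^n\ncdot\gb u^n,\Delta u^n) - 2k(f^n,\Delta u^n).
\end{equation*}
The forcing is dispatched with Cauchy--Schwarz--Young, absorbing $(\nu k/2)|\Delta u^n|^2$ into the left and leaving $(2k/\nu)|f|_{L^{\infty}(L^2)}^2$ on the right.

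The decisive step is the trilinear term, in which $u^n$ now appears in both active factors. Following \cite{tone-dw:dugl}, I would split $u^n=u^{n-1}+(u^n-u^{n-1})$, giving
\begin{equation*}
(u^n\ncdot\gb u^n,\Delta u^n) = (u^{n-1}\ncdot\gb u^n,\Delta u^n) + ((u^n-u^{n-1})\ncdot\gb u^n,\Delta u^n).
\end{equation*}
The first piece I would handle as in \eqref{q:c4}: use $|u^{n-1}|_{L^6}\le c|\gb u^{n-1}|$, interpolate $|\gb u^n|_{L^3}\le c|\gb u^n|^{1/2}|\Delta u^n|^{1/2}$, and apply Young's inequality to absorb a further $(\nu k/4)|\Delta u^n|^2$, leaving a contribution of order $(k/\nu^3)|\gb u^{n-1}|^4|\gb u^n|^2$. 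The second piece is where $\ktil_0$ enters: using $|u^n-u^{n-1}|_{L^3}^2\le c\,|u^n-u^{n-1}|\,|\gb(u^n-u^{n-1})|$ together with the $L^2$ identity (which yields $|u^n-u^{n-1}|^2\lesssim\ktil_0$), I would absorb the $|\gb(u^n-u^{n-1})|$ factor into the corresponding term already on the left, leaving a contribution scaling like $(k/\nu^4)\ktil_0\,|\gb u^n|^4$ or similar.

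Combining everything and converting the surviving $\nu k|\Delta u^n|^2$ into $\nu ky/\cpoi$ by Poincar\'e, I expect an inequality of the general shape
\begin{equation*}
\frac{c_4\,k}{\nu^3}\,y^3 + \frac{c_5\,k}{\nu^4}\,\ktil_0\,y^2 + |\gb u^{n-1}|^2 + \frac{2k}{\nu}|f|_{L^{\infty}(L^2)}^2 \ge \Bigl(1+\frac{\nu k}{\cpoi}\Bigr)y,
\end{equation*}
which upon rearrangement is the cubic equation \eqref{q:un6}. The specific definition of $K^{(n-1)}$ and the constants appearing in \eqref{q:dtfx}--\eqref{q:dtfy} should be calibrated so that this cubic has two positive real roots $0<y_1<y_2$, with the displayed inequality holding precisely on $[0,y_1]\cup[y_2,\infty)$.

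The main obstacle is to argue that $y=|\gb u^n|^2$ falls in the small branch $[0,y_1]$ rather than in $[y_2,\infty)$. I would proceed by induction: the $L^2$ energy identity bounds $|u^n-u^{n-1}|^2$ by $\ktil_0$, and combined with the $|\gb(u^n-u^{n-1})|^2$ term retained on the left and the timestep restriction \eqref{q:dtfx}, this prevents $|\gb u^n|^2$ from jumping across the gap $(y_1,y_2)$ in a single step. Thus if $|\gb u^{n-1}|^2$ lies in the small branch, so does $|\gb u^n|^2$, giving $|\gb u^n|^2\le y_1$ as claimed.
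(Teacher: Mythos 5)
There is a genuine gap, and it sits exactly at the point you yourself identify as ``the main obstacle'': excluding the branch $[y_2,\infty)$. Your proposed mechanism --- the $L^2$ identity controls $|u^n-u^{n-1}|^2$ by roughly $\ktil_0$, and this together with the $|\gb(u^n-u^{n-1})|^2$ term on the left and \eqref{q:dtfx} prevents a jump across $(y_1,y_2)$ --- does not close. An $L^2$ bound on the increment gives no control of $|\gb u^n|$; the $|\gb(u^n-u^{n-1})|^2$ term sits on the left of the very $H^1$ identity whose right-hand side you are trying to control, so invoking it is circular; and the best $H^1$ information extractable from the $L^2$ identity is $\nu k|\gb u^n|^2\le |u^{n-1}|^2+ k|f^n|_{H^{-1}}^2/\nu$, i.e.\ $|\gb u^n|^2={\sf O}(1/k)$, which is \emph{larger} than the gap location $y_+\sim(\nu^3/(3c_4k))^{1/2}\sim k^{-1/2}$ and hence rules out nothing. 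Nor is there any ``continuity'' to appeal to: $u^n$ is defined implicitly by a nonlinear equation, so a priori its $H^1$ norm could land anywhere in the admissible set $[0,y_1]\cup[y_2,\infty)$. The missing idea --- and the reason the hypothesis \eqref{q:dtfy} carries the factor $c_5\ktil_0 K^{(n-1)}/\nu^4$ --- is a \emph{second, independent} energy estimate: pair \eqref{q:full} with $2k(u^n-u^{n-1})$, use $(u^n\ncdot\gb u^n,u^n)=0$ to rewrite the trilinear term as $2k(u^n\ncdot\gb u^n,u^{n-1})$, and estimate it by $\frac{\nu k}{2}|\gb u^n|^2+\frac{ck}{\nu^3}|u^n|^2|\gb u^{n-1}|^4$. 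This yields the $k$-uniform a priori bound $|\gb u^n|^2\le\bigl(2+\tfrac{2c_5}{\nu^4}|u^n|^2|\gb u^{n-1}|^2\bigr)|\gb u^{n-1}|^2+\tfrac{2}{\nu^2}|f|^2_{L^\infty(H^{-1})}$, and \eqref{q:dtfy} is precisely the statement that this bound is below $(\nu^3/(3c_4k))^{1/2}<y_+<y_2$, forcing $|\gb u^n|^2\in[0,y_1]$.

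A secondary point: your splitting $u^n=u^{n-1}+(u^n-u^{n-1})$ in the trilinear term is not what produces \eqref{q:un6}. The paper bounds $(u^n\ncdot\gb u^n,\Delta u^n)$ directly as in \eqref{q:c4}, giving $\tfrac{c_4k}{\nu^3}|\gb u^n|^6+\nu k|\Delta u^n|^2$, so the cubic \eqref{q:un6} has only $y^3$, $y$ and constant terms and contains no $\ktil_0$; the quantity $\ktil_0$ enters only through the branch-exclusion estimate above. Your splitting also runs into trouble on its own terms: the piece $((u^n-u^{n-1})\ncdot\gb u^n,\Delta u^n)$ carries a full factor $|\Delta u^n|^2$ multiplied by $|u^n-u^{n-1}|_{H^{1/2}}$, which cannot be absorbed into $\nu k|\Delta u^n|^2$ or into $|\gb(u^n-u^{n-1})|^2$ without an a priori smallness of the increment that you do not have.
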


The crucial point which is not immediately obvious is that
$y_1=|\gb u^{n-1}|^2 + {\sf O}(k)$ for small $k$.
By estimating the ${\sf O}(k)$ more carefully, we obtain our main results:

\begin{theorem}\label{t:full}
For short times, let the timestep $k$ satisfy \eqref{q:dtfx1}, \eqref{q:dtfy1}
and \eqref{q:dtfz} below.
Then there exists a $t_f^*$ such that, as long as $0\le nk\le t_f^*$ we have
\begin{equation}
   |\gb u^n|^2 \le 2\,|\gb u_0|^2 + \,(\nu^2|f|_{L^{\infty}(L^2)}^2/{c_4})^{1/3}.
\end{equation}
For small solutions, let $u_0$ and $f$ be such that
\begin{equation}\label{q:hypf}
   |\gb u_0|^2 + \frac{2\cpoi}{\nu^2}|f|_{L^{\infty}(L^2)}^2
	\le \frac{\nu^2}{{2 \sqrt{\cpoi c_4}}},
\end{equation}
and let the timestep $k$ satisfy \eqref{q:dtf0}--\eqref{q:dtfb} below.
Then $u^n$ is bounded as
\begin{equation}\label{q:bdfh1}
   |\gb u^n|^2
	\le \ktil_1(u_0,f;\nu,\Dom)
	:= |\gb u_0|^2 + \frac{{10 }\cpoi}{\nu^2}|f|_{L^{\infty}(L^2)}^2,
\end{equation}
for all $n\in\{0,1,\cdots\}$.
\end{theorem}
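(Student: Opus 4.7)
My plan is to apply Lemma \ref{t:lem} iteratively and then combine it with the same kind of discrete-to-continuous comparison argument that was used in the semi-implicit proof. The lemma produces, at each step, a bound $|\gb u^n|^2 \le y_1$ where $y_1$ is the smallest positive root of the cubic \eqref{q:un6}; the expansion $y_1 = |\gb u^{n-1}|^2 + {\sf O}(k)$ is what makes this usable in both regimes. The task therefore splits into (i) verifying that the lemma's hypotheses \eqref{q:dtfx}--\eqref{q:dtfy} remain valid at each step under an inductive boundedness assumption on $|\gb u^{n-1}|^2$, and (ii) controlling the explicit $k$-dependence of $y_1$ tightly enough to close the induction.

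For the short-time part, I would extract from the cubic \eqref{q:un6} a difference inequality of the form
\begin{equation*}
   \frac{y_1 - |\gb u^{n-1}|^2}{k}
	\le \frac{C c_4}{\nu^3}\,|\gb u^{n-1}|^6 + \frac{C}{\nu}|f|_{L^\infty(L^2)}^2,
\end{equation*}
structurally identical to \eqref{q:aus01}. Setting $z_n := |\gb u^n|^2 + F$ with $F=(\nu^2|f|_{L^\infty(L^2)}^2/c_4)^{1/3}$, the argument of \eqref{q:dd0}--\eqref{q:dd1} then applies verbatim: $z_n$ is majorised by the solution of the continuous ODE $\dot\zeta = g(\zeta)$ with $g(\zeta)=Cc_4\zeta^3/\nu^3$, which stays below $2z_0$ for $t_n\le t_f^*:=\nu^3/\bigl(Cc_4(|\gb u_0|^2+F)^2\bigr)$. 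The explicit timestep conditions \eqref{q:dtfx1}, \eqref{q:dtfy1}, \eqref{q:dtfz} arise from requiring that the lemma's hypotheses \eqref{q:dtfx}--\eqref{q:dtfy} hold throughout this interval, using the crude a priori bound $|\gb u^{n-1}|^2 \le 2|\gb u_0|^2 + F$ to estimate $K^{(n-1)}$.

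For the small-solutions part I would proceed by induction on $n$ with the invariant $|\gb u^{n-1}|^2 \le \ktil_1 - ({10}\cpoi/\nu^2)|f|_{L^\infty(L^2)}^2\cdot\theta$ for a suitable margin $\theta$. The base case follows from the smallness assumption \eqref{q:hypf}. For the inductive step, the invariant controls $K^{(n-1)}$, so \eqref{q:dtfx}--\eqref{q:dtfy} reduce to absolute smallness conditions on $k$, namely \eqref{q:dtf0}--\eqref{q:dtfb}. It remains to show that $y_1 \le \ktil_1$: the smallness condition \eqref{q:hypf} is precisely tuned so that, when the cubic \eqref{q:un6} is analysed near its smallest positive root, the contribution of $|\gb u^{n-1}|^2$ and the forcing $|f|_{L^\infty(L^2)}^2$ combine to give $y_1$ bounded by a nondecreasing recursion whose fixed point is $\ktil_1$. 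This closes the induction and yields \eqref{q:bdfh1} for all $n$.

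The main obstacle is step (ii): analysing the smallest positive root $y_1$ of the cubic \eqref{q:un6} with enough precision to extract simultaneously the $y_1 = |\gb u^{n-1}|^2+{\sf O}(k)$ expansion (for the short-time ODE comparison) and the monotone contraction toward $\ktil_1$ (for the small-solutions induction). The coefficient of $\ktil_0 K^{(n-1)}$ in \eqref{q:dtfy} suggests that the cubic has a mixed term coupling the $L^2$ and $H^1$ bounds, and keeping track of both pieces while showing that the smallness assumption \eqref{q:hypf} suffices to guarantee $y_1\le\ktil_1$ is the delicate calculation. Everything else is a bookkeeping exercise in the style of the semi-implicit proof.
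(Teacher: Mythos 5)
Your architecture matches the paper's: apply Lemma~\ref{t:lem} step by step, turn the root bound $|\gb u^n|^2\le y_1$ into an explicit one-step inequality, then run the ODE-comparison argument of \eqref{q:dd0}--\eqref{q:dd1} for short times and a summable recursion via \eqref{q:dgrone} for small data. But the step you defer as ``the main obstacle'' --- extracting quantitative information about the smallest positive root $y_1$ of the cubic \eqref{q:un6} --- is not a bookkeeping detail; it is essentially the entire content of the proof, and without it neither half of your argument is established. The paper's resolution is elementary but must be supplied: since $G(\cdot\,;x)$ is positive on $[0,y_1)$ and negative on $(y_1,y_+]$, it suffices to exhibit a trial point $\bar y\le y_+$ with $G(\bar y;x)\le0$, which forces $y_1\le\bar y$. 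For short times the trial point is $\bar y=(1+ak)x$ with $a=2c_4x^2/\nu^3$, giving \eqref{q:e02} under \eqref{q:dtf3}; for small data it is $\bar y=x/(1+\nu k/(4\cpoi))$, giving the damped recursion \eqref{q:linf} under \eqref{q:hypf0}. You should also be aware that your small-solutions induction, as stated, would not close: a bound of the form $y_1\le|\gb u^{n-1}|^2+{\sf O}(k)$ lets the invariant grow by ${\sf O}(k)$ per step, which is fatal over $n\sim1/k$ steps. What saves the argument is precisely the contraction factor $(1+\nu k/(4\cpoi))^{-1}$ in \eqref{q:linf}, coming from the linear coefficient $-(1+\nu k/(2\cpoi))$ of the cubic; the smallness condition \eqref{q:hypf} guarantees the cubic term consumes at most half of this damping, and then \eqref{q:dgrone} sums the forcing geometrically to yield $\ktil_1$.

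One smaller point: the term $\ktil_0 K^{(n-1)}$ in \eqref{q:dtfy} does not indicate a mixed $L^2$--$H^1$ term in the cubic \eqref{q:un6} (there is none). It originates in the separate estimate obtained by testing \eqref{q:full} against $2k(u^n-u^{n-1})$, which is needed in the Lemma only to exclude the upper branch $y\ge y_2$; it plays no role in locating $y_1$. Tracing the hypotheses to their actual sources matters here because \eqref{q:dtfx1}--\eqref{q:dtfy1} are obtained by substituting the a priori bound $\ktil$ into the Lemma's conditions, while \eqref{q:dtfz} and \eqref{q:dtf4} come from the trial-point computation itself.
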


We note that, up to constants depending only on the domain $\Dom$,
both the smallness condition \eqref{q:hypf} and the bound \eqref{q:bdfh1}
are the same as those in the continuous case, \eqref{q:K1K1} and \eqref{q:K1}.
Also, the time bound $t_f^*$ is essentially that in the continuous case
(smaller by a factor of {$\sfrac12$} which can be improved to $1-\eps$
with some work and more restriction on $k$).

\begin{proof}[Proof of Lemma~\ref{t:lem}]
Multiplying \eqref{q:full} by $-2k\Delta u^n$, we have
\begin{equation}\begin{aligned}
   |\gb u^n|^2 &+ |\gb(u^n-u^{n-1})|^2 + 2\nu k |\Delta u^n|^2\\
	&= |\gb u^{n-1}|^2 + 2k (u^n\ncdot\gb u^n,\Delta u^n) - 2k (f^n,\Delta u^n).
\end{aligned}\end{equation}
Bounding the nonlinear term as we did in \eqref{q:c4},
\begin{equation}
   2k\bigl|(u^n\ncdot\gb u^n,\Delta u^n)\bigr|
	\le \frac{{c_4k}}{\nu^3}|\gb u^n|^6 + \nu k |\Delta u^n|^2,
\end{equation}
we find
\begin{align}
   &0 \le \frac{{c_4k}}{\nu^3} |\gb u^n|^6 - |\gb u^n|^2 - {\frac{\nu k}{2}} |\Delta u^n|^2
	+ |\gb u^{n-1}|^2 + \frac{2k}\nu |f^n|^2 \notag\\
   \Rightarrow\quad
   &0 \le \frac{{c_4k}}{\nu^3} |\gb u^n|^6 - \Bigl(1 + \frac{\nu k}{{2\cpoi}} \Bigr)|\gb u^n|^2 + |\gb u^{n-1}|^2 + \frac{2k}\nu |f|_{L^{\infty}(L^2)}^2.\label{q:un6}
\end{align}
Let $y:=|\gb u^n|^2$, $x:=|\gb u^{n-1}|^2 + 2k|f|_{L^{\infty}(L^2)}^2/\nu$ and
\begin{equation}\label{q:G}
   G(y;x) := (c_4k/{\nu^3}) y^3 - (1+\nu k/(2 \cpoi)) y + x.
\end{equation}
We write $G(y)$ instead of $G(y;x)$ when there is no risk of confusion.
We are of course interested in the solution set of $G(y)\ge0$.

\begin{figure}
\begin{center}
\scalebox{1} 
{\begin{pspicture}(0,-2.5)(11.1,2.2) 
\psline[linewidth=0.03cm,arrowsize=0.05cm 2.0,arrowlength=1.4,arrowinset=0.4]{<-}(3.72,2.0)(3.72,-2.6) 
\psline[linewidth=0.03cm,arrowsize=0.05cm 2.0,arrowlength=1.4,arrowinset=0.4]{->}(0.1,-1.2)(11.08,-1.2) 
\usefont{T1}{ppl}{m}{n}
\rput(11.0,-0.9){$y$}
\psbezier[linewidth=0.04cm](0.5,-2.0834374)(0.8644867,-1.6677852)(2.1,0.6565625)(2.9524715,0.6530842)(3.804943,0.649606)(4.3476424,-0.54691577)(4.640456,-0.96430707)(4.93327,-1.3816984)(6.0011787,-2.7034376)(7.1379848,-2.668655)(8.274791,-2.6338723)(9.78,0.0565625)(10.02,0.8565625)
\rput(4.5628123,-1.6784375){$y_1$}
\psdots[dotsize=0.2](4.82,-1.2034374)
\rput(9.322812,-1.6184375){$y_2$}
\psdots[dotsize=0.2](8.9,-1.2234375)
\rput(7.14,-0.8){$y_{+}$}
\psdots[dotsize=0.2](7.05,-2.6634376)
\psdots[dotsize=0.2](7.05,-1.21)
\psline[linewidth=0.03cm,linestyle=dashed,dash=0.1cm 0.1cm](7.05,-1.2234375)(7.05,-2.6834376)
\rput(0.7428125,-0.7384375){$y_0$}
\psdots[dotsize=0.18](1.10,-1.2034374)
\rput(4.7,0.6715625){$G(0)=x$}
\psdots[dotsize=0.2](3.72,0.3165625)
\end{pspicture}}
\end{center}
\caption{The graph of $G(y)$ in \eqref{q:G}: $y_{+}$ is a local minimum.}
\label{fig1}
\end{figure}
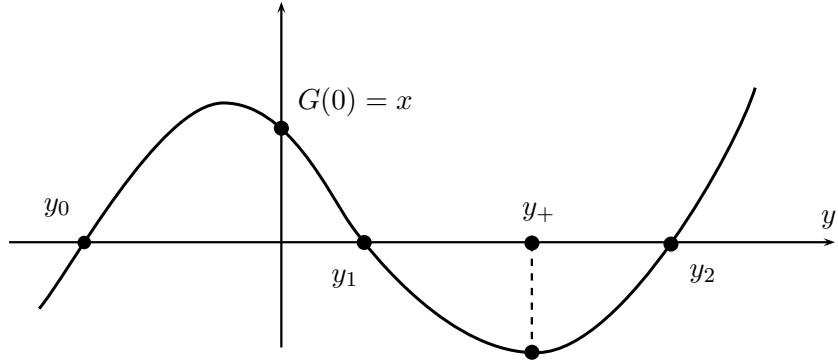

Under the assumption \eqref{q:dtf1} below on the timestep $k$,
the graph of the cubic $G$ is (qualitatively) as shown in Figure~\ref{fig1}.
We note in particular that $G(y)=0$ has a negative root $y_0$ and
two positive roots $y_1$ and $y_2$.
To verify this, we note the following.
First, $G(y)\to\pm\infty$ as $y\to\pm\infty$.
Next, $G(y)$ has two local extrema,
\begin{equation}
   y_\pm = \pm\Bigl(\frac{{\nu^3}}{3c_4 k}\Bigl[1+\frac{\nu k}{2\cpoi}\Bigr]\Bigr)^{1/2},
\end{equation}
with $y_-<0$ being a local maximum and $y_+>0$ a local minimum,
as verified by computing $G''(y_\pm)$.
Since $G(0)=x>0$ (the problem is trivial if $x=0$), we have $G(y_-)>0$.
Finally, computing
\begin{equation}
   G(y_+) = -\frac23\Bigl(1+\frac{\nu k}{2 \cpoi}\Bigr)\Bigl(\frac{{\nu^3}}{3c_4k}\Bigl[1+\frac{\nu k}{2 \cpoi}\Bigr]\Bigr)^{1/2} + x,
\end{equation}
we conclude that $G(y_+)<0$ if (this is essentially a restriction on $k$)
\begin{equation}\label{q:dtf1}
   |\gb u^{n-1}|^2 + \frac{2k}\nu|f|_{L^{\infty}(L^2)}^2
	< \frac23\Bigl(\frac{{\nu^3}}{3c_4k}\Bigr)^{1/2}.
\end{equation}
This implies the existence of the two positive roots $y_1$ and $y_2$
with $y_1<y_+<y_2$.

Now \eqref{q:un6} implies that $|\gb u^n|^2=y$ lies in the disjoint
set $[0,y_1]\cup[y_2,\infty)$.
However, $y_2>y_+\sim k^{-1/2}$, which is absurd for small $k$.
To prove that $y\not\in[y_2,\infty)$, we multiply $\eqref{q:full}_1$
by $2k(u^n-u^{n-1})$ in $L^2$ to get
\begin{equation}\label{q:est_err}\begin{aligned}
   2|u^n&-u^{n-1}|^2 + \nu k|\gb u^n|^2 - \nu k|\gb u^{n-1}|^2 + \nu k|\gb(u^n-u^{n-1})|^2\\
	&\quad= -2k (u^n\ncdot\gb u^n,u^n-u^{n-1}) + 2k (f^n,u^n-u^{n-1})
	=: I_1 + I_2.
\end{aligned}\end{equation}
Bounding the rhs as
\begin{align*}
   |I_2| &\le \frac{k}\nu|f^n|_{H^{-1}}^2 + {\nu k}|\gb(u^n-u^{n-1})|^2 \hskip-50pt\\
   |I_1| &= 2k\bigl|(u^n\ncdot\gb u^n,u^{n-1})\bigr|
	& &\le 2k |u^n|_{L^3}^{}|\gb u^n|_{L^2}^{}|u^{n-1}|_{L^6}^{}\\
	&\le ck |u^n|_{H^{1/2}}^{}|\gb u^n|\,|\gb u^{n-1}|
	& &\le ck\,|u^n|^{1/2}|\gb u^n|^{3/2}|\gb u^{n-1}|\\
	&\le \frac{\nu k}2|\gb u^n|^2 + \frac{ck}{\nu^3} |u^n|^2|\gb u^{n-1}|^4,
\end{align*}
and dropping the $2|u^n-u^{n-1}|^2$ on the lhs in \eqref{q:est_err}, we arrive at
\begin{equation}
   |\gb u^n|^2 \le \Bigl(2 + \frac{2 c_5}{{\nu^4}}|u^n|^2|\gb u^{n-1}|^2\Bigr)|\gb u^{n-1}|^2 + \frac2{\nu^2}|f|^2_{L^{\infty}(H^{-1})}.
\end{equation}
If we now assume that (effectively a timestep restriction)
\begin{equation}\label{q:dtf2}
   \Bigl(2 + \frac{2 c_5}{{\nu^4}}|u^n|^2|\gb u^{n-1}|^2\Bigr)|\gb u^{n-1}|^2 + \frac2{\nu^2}|f|^2_{L^{\infty}(H^{-1})}
	\le \Bigl(\frac{{\nu^3}}{3c_4k}\Bigr)^{1/2},
\end{equation}
noting that the rhs $<y_+<y_2$,
we can conclude that $|\gb u^n|^2 < y_2$ and therefore $|\gb u^n|^2 \in [0,y_1]$.
This gives us the local $H^1$ integrability of the scheme \eqref{q:full}:
if $k$ (is small enough that it) satisfies \eqref{q:dtf1} and \eqref{q:dtf2},
the one-step solution of \eqref{q:full} is bounded in $H^1$.
\end{proof}

\hbox to\hsize{\qquad\hrulefill\qquad}\medskip

\begin{proof}[Proof of Theorem~\ref{t:full}]
We begin with the short-time case and
assume the hypotheses (local in $n$) of Lemma~\ref{t:lem}.
Since $y_1$ is the root of a cubic $G(y_1)=0$, the bound $|\gb u^n|^2\le y_1$
is not very convenient, so we compute a more useful bound.
Recalling that $x>0$, we consider for some $a>0$
\begin{equation}\begin{aligned}
   G\bigl((1+ak)x;x\bigr)
	&= xk\,\Bigl[\frac{c_4}{\nu^3}(1+ak)^3x^2 - \Bigl(\frac{\nu}{2 \cpoi} + a\Bigr) - \frac{a\nu k}{2 \cpoi}\Bigr]\\
	&< xk\,\Bigl[\frac{c_4}{\nu^3}(1+ak)^3x^2 - a\Bigr].
\end{aligned}\end{equation}
Setting $a=2c_4x^2/\nu^3$, this implies $G\bigl((1+ak)x\bigr)<0$ if
\begin{equation}\label{q:dtf3}
   1 + ak \le 2^{1/3}
   \quad\Leftrightarrow\quad
   (|\gb u^{n-1}|^2 + 2k|f|_{L^{\infty}(L^2)}^2 /\nu)^2 2c_4k/\nu^3 \le 2^{1/3}-1.
\end{equation}
Assuming this, Lemma~\ref{t:lem} then gives us the explicit one-step estimate
\begin{align}
   \!\!\!\!\!\!\!\!\!\!\!\!|\gb u^n|^2 &\le y_1
	\le (1+ak)\,x\notag\\
	&= |\nabla u^{n-1}|^2
	  + \frac{2k}{\nu}|f|_{L^{\infty}(L^2)}^2
	  + \frac{2c_4 k}{\nu^3} \bigl(|\nabla u^{n-1}|^2 + ({2k}/{\nu}) |f|_{L^{\infty}(L^2)}^2 \bigr)^3,\label{q:e02}
\end{align}
which we can rewrite as
\begin{equation}
   \!\!\frac{|\gb u^n|^2 - |\gb u^{n-1}|^2}{k}
	\le \frac{2c_4}{\nu^3}\Bigl[\Bigl(|\gb u^{n-1}|^2 + \frac{2k}{\nu}|f|_{L^{\infty}(L^2)}^2\Bigr)^3 + \frac{\nu^2}{c_4}|f|_{L^{\infty}(L^2)}^2 \Bigr].
\end{equation}

To obtain a finite-time bound on $|\gb u^n|$, we proceed in analogy
with \eqref{q:dzdt} and define
\begin{equation}
   z_n := |\gb u^n|^2 + F
   \quad\textrm{where }
   F^3 = \frac{2\nu^2}{c_4}|f|_{L^{\infty}(L^2)}^2.
\end{equation}
By expanding both sides, we have
\begin{equation}
   \Bigl(|\gb u^{n-1}|^2 + \frac{2k}{\nu}|f|_{L^{\infty}(L^2)}^2\Bigr)^3 + \frac{\nu^2}{c_4}|f|_{L^{\infty}(L^2)}^2
	\le z_{n-1}^3,
\end{equation}
subject to the timestep restriction
\begin{equation}\label{q:dtf4}
   k \le  \frac{ {\nu^{5/3}}}{2c_4^{1/3}|f|_{L^{\infty}(L^2)}^{4/3}}
   \quad\Rightarrow\quad
   \begin{cases}
   &4^{1/3}c_4^{1/3}|f|_{L^{\infty}(L^2)}^{4/3}\,k \le \nu^{5/3},\\
   &4^{2/3}c_4^{2/3}|f|_{L^{\infty}(L^2)}^{8/3}\,k^2 \le {\nu^{10/3}}\vphantom{\Big|^|},\\
   &8c_4|f|_{L^{\infty}(L^2)}^4\,k^3 \le \nu^5.
   \end{cases}
\end{equation}
Then \eqref{q:e02} implies
\begin{equation}
   \frac{z_n-z_{n-1}}{k} \le \frac{2c_4}{\nu^3}z_{n-1}^3
	=: g(z_{n-1}).
\end{equation}
Arguing as we did in the semi-implicit case [cf.~\eqref{q:dd0}--\eqref{q:dd1}],
we conclude that $z_n\le 2z_0$ for all $n\ge0$ such that
$nk = t_n\le \nu^3/(8c_4\zeta_0^2) $.

This proves the theorem subject to the timestep restrictions,
which we collect here.
First, \eqref{q:dtfx} and \eqref{q:dtfy} are implied by
\begin{align}
   &\ktil \le \frac12\Bigl(\frac{\nu^3}{3c_4k}\Bigr)^{1/2}\label{q:dtfx1},\\
   &\Bigl(1 + \frac{c_5}{\nu^4} \ktil_0\ktil\Bigr)\ktil + |f|^2_{L^{\infty}(H^{-1})}/\nu^2 \le \Bigl(\frac{\nu^3}{12c_4k}\Bigr)^{1/2}\label{q:dtfy1},
\end{align}
where unlike in Lemma~\ref{t:lem}, here
$\ktil:=2\,|\gb u^0|^2 + 2\,(\nu^2|f|_{L^{\infty}(L^2)}^2/c_4)^{1/3} + ({10}c_0/\nu)|f|_{L^{\infty}(L^2)}^2$.
Next, \eqref{q:dtf4} is good as it stands.
Finally, using \eqref{q:dtf4} to handle the $k$ inside the bracket,
\eqref{q:dtf3} is implied by
\begin{equation}\label{q:dtfz}
   \biggl(2\,|\gb u^0|^2 + \frac{ {(1 + 2^{1/3})}  {\nu^{2/3}}  |f|_{L^{\infty}(L^2)}^{2/3}  }{c_4^{1/3}}\biggr)^2
	\le \frac{(2^{1/3}-1)\nu^3}{2c_4k}.
\end{equation}
This proves the short-time case.


\medskip
For small solutions, we first derive a more useful
explicit bound for $|\gb u^{n-1}|^2$.
We claim that with the assumption \eqref{q:hypf}, $|\gb u^n|^2\le y_1$
implies
\begin{equation}\label{q:linf}
   \Bigl(1+\frac{\nu k}{{4}\cpoi}\Bigr)|\gb u^n|^2
	\le |\gb u^{n-1}|^2 + \frac{2k}\nu |f|_{L^{\infty}(L^2)}^2.
\end{equation}
To prove this, we set
$y_*:=\bigl(|\gb u^{n-1}|^2 + 2k|f|_{L^{\infty}(L^2)}^2/\nu\bigr)/\bigl(1+\nu k/({4}\cpoi)\bigr)$
and compute
\begin{equation}\begin{aligned}
   G(y_*) = y_*\,\Bigl(1+\frac{\nu k}{{4}\cpoi}\Bigr)^{-2}\Bigl\{-\frac{\nu k}{{4}\cpoi}\Bigl(1+\frac{\nu k}{{4}\cpoi}\Bigr)^2 + \frac{c_4k}{\nu^3}\,x^2\Bigr\}.
\end{aligned}\end{equation}
Now $G(y_*)\le 0$ implies that $y_*\ge y_1$, and the former is true if
\begin{equation}\label{q:hypf0}
   |\gb u^{n-1}|^2 + \frac{2k}\nu|f|_{L^{\infty}(L^2)}^2
	= x \le \frac{\nu^2}{{2 \sqrt{ \cpoi c_4}}}.
\end{equation}

To obtain the uniform bound, we sum \eqref{q:linf} using \eqref{q:dgrone}
to find
\begin{equation}
   |\gb u^n|^2 \le \Bigl(1+\frac{\nu k}{{4}\cpoi}\Bigr)^{-n}|\gb u^0|^2
	+ \frac{{8}\cpoi}{\nu^2}|f|_{L^{\infty}(L^2)}^2 + \frac{2k}\nu|f|_{L^{\infty}(L^2)}^2.
\end{equation}
Assuming that
\begin{equation}\label{q:dtf0}
   k \le \cpoi/\nu,
\end{equation}
we can absorb the last term into the penultimate one to obtain \eqref{q:bdfh1}.
Consolidating our assumptions, the smallness condition \eqref{q:hypf0}
is now implied by \eqref{q:hypf},
while the timestep restrictions \eqref{q:dtf1} and \eqref{q:dtf2} can
both be satisfied by taking $k$ sufficiently small to satisfy
\begin{align}
   &\ktil_1 \le \frac12\Bigl(\frac{\nu^3}{3c_4k}\Bigr)^{1/2} \label{q:dtfa},\\
   &\Bigl(1 + \frac{c_5}{\nu^4} \ktil_0\ktil_1\Bigr)\ktil_1 + |f|^2_{L^{\infty}(H^{-1})}/\nu^2 \le \Bigl(\frac{\nu^3}{12c_4k}\Bigr)^{1/2}.\label{q:dtfb}
\end{align}
This proves the theorem.
\end{proof}


\vspace{10mm}
\end{document}